\newcommand{\bburl}[1]{\textcolor{blue}{\url{#1}}}
\newcommand{\be}{\begin{equation}}
\newcommand{\ee}{\end{equation}}
\newcommand{\bea}{\begin{eqnarray}}
\newcommand{\eea}{\end{eqnarray}}
\newtheorem{thm}{Theorem}[section]
\newtheorem{cor}[thm]{Corollary}
\newtheorem{lem}[thm]{Lemma}
\newtheorem{defi}[thm]{Definition}
\newtheorem{rek}[thm]{Remark}
\numberwithin{equation}{section}
\newcommand{\Mod}[1]{\ \mathrm{mod}\ #1}
\begin{document}

\title{A New Transcendental Number from $N^N$}

\author{H\`ung Vi\d{\^e}t Chu}
\email{\textcolor{blue}{\href{mailto:chuh19@mail.wlu.edu}{chuh19@mail.wlu.edu}}}
\address{Department of Mathematics, University of Illinois at Urbana-Champaign, Urbana, IL 61820, USA}

\begin{abstract}
We first give a summary of the history of transcendental numbers then use a nice technique by G. Dresden to prove a new transcendental number. In particular, while previous work looked at the last non-zero digit of $n^n$, we consider the digit right before its last non-zero digit and show that the infinite decimal built from these digits is transcendental. 
\end{abstract}

\maketitle
\section{Introduction and main result}
We begin by giving the definition of a transcendental number and its brief history. 
\begin{defi}
An \textbf{algebraic} number is a complex number, which is a root of a non-zero polynomial equation with integer coefficients. A number is said to be \textbf{transcendental} if it is not algebraic. 
\end{defi}
The name \textit{transcendental} comes from the Latin \textit{transcend\u{e}re} that means to transgress or to go over. It was generally believed that Euler was the first to define transcendental numbers in the modern sense. 
Though much less is known about transcendental numbers than algebraic numbers, transcendental numbers are not rare. In fact, Cantor \cite{Can} proved that most numbers are transcendental because while the set of complex numbers is uncountable, the set of algebraic numbers is only countable. In 1844, Liouville constructed and proved the transcendence of
$$\sum_{k=1}^{\infty}10^{-k!}\ =\ 0.11000100000000000000000100\ldots ,$$
which is now known as the Liouville constant, the first transcendental number to be proved. 

Later, Hermite and Lindemann showed the transcendence of $e$ and $\pi$, respectively \cite{Lin}. It is worth noting: in 1837, Wantzel \cite{Wa} proved that lengths constructible with ruler and compass must be algebraic. This result is a breakthrough towards showing the impossibility of squaring the circle, a famous problem dated back to the ancient Greeks; therefore, the later Lindemann's proof of the transcendence of $\pi$ stopped fruitless attempts of circle squarers. 

In 1934, the Gelfond-Schneider theorem made another breakthrough on transcendental numbers. It states that if $a$ and $b$ are algebraic numbers with $a\neq  0, 1,$ and $b$ irrational, then any value of $a^b$ is a transcendental number. Nowadays, much is still unknown about transcendental numbers. For example, the following are possible transcendental numbers not yet (dis)proved: $\pi\pm e$, $\pi\cdot e$, $\pi/e$, $\pi^e$, the Riemann zeta function at odd integers, the Euler-Mascheroni constant $\gamma$, and this list goes on.

We are now ready to look at our numbers of concern. First, consider the last digit of each term in sequence $\{(n^n)\}_{n=1}^\infty$. We construct the number $N = 0.d_1d_2d_3\ldots$, where $d_n$ is the last digit of $n^n$. For example, since $1^1 = 1$, $2^2 = 4$, $3^3 = 27$, $4^4 = 256$, and so on, we have $N = 0.1476563690\ldots$. In 1998, Euler and Sadek \cite{ES} proved that $N$ is rational with a period of $20$; that is,
\begin{align*}N &\ =\ 0.\overline{14765636901636567490}\\ &\ =\ (1476563690\mbox{ }1636567490)/(9999999999\mbox{ }9999999999).\end{align*}
Though surprising, the result is not hard to prove by noticing that $$n^n \ \equiv\ (n+20k)^{n+20k}\Mod 10.$$
Later, Dresden \cite{G1} extended this nice result by looking at the numbers formed by the last \textit{non-zero} digits of $n^n$ and $n!$ and proved that they are both irrational. This is an interesting and somewhat counter-intuitive result because ignoring $0$ as the last digit changes the number radically.\footnote{Dresden looked at the last non-zero digit of $n!$ because after $n=4$, the last digit of $n!$ is always zero. In a letter to Mathematics Magazine (February 2002), Stan Wagon pointed out that the question of the periodicity of the last non-zero digit of $n!$ 
appeared several times in Crux Mathematicorum in the 1990s.} Dresden also raised the question of whether they were transcendental and answered this positively in a later paper \cite{G2}. The proof used an ingenious technique involving the below theorem that characterizes transcendental numbers. 
\begin{thm}[Thue, Siegel, Roth]\footnote{This theorem is built from the work of three mathematicians and is optimal in the sense that if we drop the $\varepsilon$, we theorem no longer holds.}\label{TSR}
For $\alpha$ algebraic and $\varepsilon > 0$, there exist only finitely many rational numbers $p/q$ such that $$\bigg|\alpha-\frac{p}{q}\bigg| \ < \ \frac{1}{q^{2+\varepsilon}}.$$
\end{thm}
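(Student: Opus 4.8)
The plan is to argue by contradiction, following the route pioneered by Thue, refined by Siegel, and completed by Roth. Suppose $\alpha$ is algebraic of degree $d$ and that, contrary to the claim, there are infinitely many rationals $p/q$ (in lowest terms, $q>0$) with $|\alpha - p/q| < q^{-2-\varepsilon}$. From this infinite list I would extract a finite subsequence $p_1/q_1, \ldots, p_m/q_m$ whose denominators grow extremely fast, $q_1 \ll q_2 \ll \cdots \ll q_m$, with the number of terms $m$ chosen large (depending on $\varepsilon$ and $d$). The heart of the method is to manufacture a single auxiliary polynomial in $m$ variables that simultaneously ``sees'' all of these approximations, and then to derive incompatible upper and lower bounds on how strongly that polynomial vanishes at the rational point $(p_1/q_1, \ldots, p_m/q_m)$.

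First I would construct the auxiliary polynomial. Using Siegel's Lemma — a pigeonhole argument on a linear system with more unknowns than equations — I would produce a nonzero polynomial $P(x_1,\ldots,x_m) \in \Z[x_1,\ldots,x_m]$, of bounded degree $r_j$ in each variable $x_j$ and with integer coefficients of controlled size, that vanishes to high order (has large \emph{index}) at the diagonal point $(\alpha, \ldots, \alpha)$. The degrees $r_j$ would be tuned so that $r_j \log q_j$ is roughly constant in $j$, which balances the contributions of the various approximations.

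Next come the two competing estimates. For the upper bound, I would Taylor-expand $P$ and its low-order partial derivatives about $(\alpha,\ldots,\alpha)$ and substitute the approximations $p_j/q_j$; since each $|\alpha - p_j/q_j|$ is tiny and $P$ already vanishes heavily at the diagonal, this forces $P$ together with suitable derivatives to be extraordinarily small at $(p_1/q_1,\ldots,p_m/q_m)$, i.e. to have large index there too. For the lower bound, I would observe that any nonzero value obtained from $P$ and its derivatives at the rational point is a nonzero rational whose denominator divides $q_1^{r_1}\cdots q_m^{r_m}$, so by the ``a nonzero integer has absolute value at least one'' principle it cannot be too small. Playing these against each other yields a contradiction once $m$ is taken large enough.

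The main obstacle, and the genuinely original contribution of Roth, is controlling the index of $P$ at the rational point from above: this is \emph{Roth's Lemma}, an intricate inductive argument on the number of variables bounding how concentrated the vanishing of a low-height polynomial can be at a point with rapidly increasing denominators. Everything else is bookkeeping with heights and Taylor coefficients, but Roth's Lemma is where the real difficulty lies; it is precisely the ingredient that let Roth pass from Siegel's finitely many variables to arbitrarily many and thereby reach the optimal exponent $2+\varepsilon$. I would therefore expect to spend the bulk of the effort on that lemma, treating Siegel's Lemma and the Taylor estimates as comparatively routine.
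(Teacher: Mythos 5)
The paper does not prove this theorem at all: it is imported as a known deep result, attributed to Thue, Siegel, and Roth, and used purely as a black box (via its contrapositive) to certify the transcendence of $P$. So there is no ``paper's own proof'' to compare against; any attempt to actually prove the statement goes far beyond what the paper undertakes.

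Your outline is a faithful description of the standard strategy of Roth's 1955 argument --- the contradiction setup with finitely many approximations of rapidly growing denominator, the auxiliary polynomial in $m$ variables produced by Siegel's Lemma with high index at $(\alpha,\dots,\alpha)$, the Taylor-expansion upper bound at the rational point, the Liouville-type lower bound from integrality, and Roth's Lemma to reconcile the two. But as written it is a roadmap, not a proof. The two genuinely hard ingredients are only named: Roth's Lemma (the inductive bound on the index of a low-height polynomial at a point whose coordinates have rapidly increasing denominators) is asserted but not proved, and the quantitative bookkeeping that actually closes the contradiction --- how large $m$ must be in terms of $d$ and $\varepsilon$, how the index thresholds at the algebraic point and at the rational point are chosen so that the upper and lower bounds genuinely collide --- is entirely absent. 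Without these, no contradiction has been derived, and the exponent $2+\varepsilon$ has not been earned (the same skeleton with weaker index control only recovers the Thue or Siegel exponents). If the goal is to supply a proof of Theorem~\ref{TSR}, you would need Siegel's Lemma with explicit height bounds, a full proof of Roth's Lemma, and the verification of the numerical inequalities; if the goal is merely to use the theorem, the appropriate move --- and the one the paper makes --- is to cite it.
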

Thus, if we can show that for some fixed $\varepsilon > 0$, there are infinitely many such numbers $p/q$ that satisfy the above inequality, then $\alpha$ is transcendental. Since then, research on numbers formed by the last non-zero digits has made considerable progress. In particular, Grau and Oller-Marcen \cite{Gr} studied the sequences defined by the last and the last non-zero digits of $n^n$ in base $b$. Ikeda and Matsuoka \cite{IM} generalized Dresden's technique and found new examples of transcendental numbers formed by $n^n, n^{n^n}, n^{n^{n^n}}$, etc. and the number of trailing zeros of $n^j$ for any fixed $j\in \mathbb{N}$.\footnote{Trailing zeros are zeros at the end of a number (if any).} Last, but not least, Deshouillers and Ruzsa \cite{DR} employed Dresden's technique to prove an asymptotic density of a certain sequence. However, none has looked at the digit right before the last non-zero digit. We call this digit \textit{rbln} (right before the last non-zero). 

Our main result concerns this natural extension; that is, we work with the number $P = 0.d_1d_2d_3\ldots$, where $d_n$ is the rbln digit of $n^n$. In order that $P$ is well-defined, if $n^n$ has exactly $1$ non-zero digit, we will take $0$ as the rbln digit. Formally, let $n\in\mathbb{N}$, then the function $\mbox{rbln}(n)$ 
\begin{enumerate}
    \item gives $0$ if $n$ has exactly one non-zero digit,
    \item gives the rbln digit of $n$ if $n$ has at least two non-zero digits. 
\end{enumerate} 
For example, $\mbox{rbln}(1000) = 0$ and $\mbox{rbln}(10504200) = 4$. Our definition of the function $\mbox{rbln}$ is natural since $1000 = 01000$, and so, $\mbox{rbln}(1000) = \mbox{rbln}(01000) = 0$. On the other hand, the function $\mbox{lnzd}(n)$ is defined as in \cite{G1}; that is, $\mbox{lnzd}(n)$ gives the last non-zero digit of $n$. 
\begin{thm}\label{pen}
Let $P = 0.d_1d_2d_3\ldots$, where $d_n = \mbox{rbln}(n^n)$. Then $P$ is transcendental. 
\end{thm}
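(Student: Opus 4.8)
The plan is to apply the Thue-Siegel-Roth theorem (Theorem \ref{TSR}) in exactly the manner outlined in the excerpt: to exhibit, for some fixed $\varepsilon > 0$, infinitely many rationals $p/q$ with $|P - p/q| < 1/q^{2+\varepsilon}$, which forces $P$ to be transcendental. The guiding idea behind Dresden's technique is to locate long, highly structured (typically periodic or predictable) runs in the digit sequence $d_n$. If the digits $d_n, d_{n+1}, \ldots, d_{n+L-1}$ of $P$ agree with those of some rational number of small denominator over a sufficiently long block, then truncating $P$ at a well-chosen point yields a rational approximation whose error is smaller than the reciprocal of a small power of the denominator. Concretely, I would search for an index $m$ and a period $s$ such that $d_{m+j}$ is eventually periodic with period $s$ for $j$ in some long range; the truncation then produces $p/q$ with $q \approx 10^{m}$ and approximation error governed by the length of the periodic agreement.

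The key structural input I would establish first is the behavior of $\mathrm{rbln}(n^n)$ along arithmetic progressions. For the last-digit problem one uses $n^n \equiv (n+20k)^{n+20k} \pmod{10}$; for the last \emph{non-zero} digit Dresden analyzed $n^n$ modulo powers of $10$ after stripping trailing zeros. For the rbln digit I would need to understand $n^n$ modulo $100$ (the last two digits), then remove trailing zeros to read off the penultimate nonzero digit. The first step, therefore, is to identify a subsequence of indices $n$ along which $n^n$ has a controlled number of trailing zeros and a predictable value modulo $100$. The natural candidates are the multiples of $10$, say $n = 10k$, since $(10k)^{10k} = 10^{10k} k^{10k}$ contributes a large, exactly countable block of trailing zeros, so the rbln digit is read off from the low-order nonzero part of $k^{10k}$; alternatively one tracks residues of $n^n \bmod 100$ over a full period (the relevant period divides a multiple of the period $20$ appearing for the last digit, lifted to modulus $100$).

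The heart of the argument is to produce the long predictable block. I would argue that among the indices $n$ in a suitable residue class, the rbln digits $d_n$ are \emph{constant} (or follow a short fixed cycle) over a stretch whose length grows relative to the starting index — for instance by showing that for $n$ in a window of the form $[\,10^t, 10^t + c\,]$ the quantity $n^n$ shares the same last two nonzero digits. Stacking such windows, or exploiting a single window of length growing like a positive power of the truncation point, gives the crucial inequality: if the block of agreement has length $\ell(m)$ starting at digit $m$, then $|P - p/q| < 10^{-(m+\ell(m))}$ with $q = 10^{m}$, and transcendence follows as soon as $\ell(m) > (1+\varepsilon)m$ for infinitely many $m$. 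The main obstacle, and the step requiring the most care, is precisely controlling the trailing zeros: to read the rbln digit one must know \emph{exactly} how many trailing zeros $n^n$ has, which depends on $v_2(n^n)$ and $v_5(n^n)$, i.e. on $n\cdot v_2(n)$ and $n\cdot v_5(n)$. Isolating a subsequence where these valuations are balanced enough that the last two nonzero digits are pinned down, while still furnishing blocks of superlinearly growing length, is the genuinely delicate part; everything after that is the routine Roth-theorem estimate.
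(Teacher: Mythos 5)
Your high-level frame (Roth's theorem plus a structured digit sequence yielding rationals $p/q$ with $|P-p/q|<q^{-(2+\varepsilon)}$) matches the paper, and you correctly identify that the problem reduces to understanding $n^n$ modulo $100$ after stripping trailing zeros. But the mechanism you propose for manufacturing the approximants does not work, and the idea that actually drives the proof is missing. First, a truncation denominator $q=10^m$ forces the error $|P-p/q|$ to be about $d_{m+1}\cdot 10^{-(m+1)}$ unless the digits after position $m$ vanish in a long run; the rbln sequence has no such runs (its opening block is $0025254180\,1551717277\ldots$), so $|P-p/q|\gg q^{-2}$ and Roth gives nothing. Second, the digits $d_n$ are \emph{not} constant, nor periodic with any short period, on any long window $[10^t,10^t+c]$: one has $d_n=d_{n+100}$ only for $100\nmid n$, while at multiples of $100$ the digit jumps according to $\mbox{lnzd}(n)$ (e.g.\ $d_{100}=0$ but $d_{200}=7$), so no residue-class window supplies the superlinearly long block of agreement your outline requires. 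Finally, the ``delicate balancing of $v_2$ and $v_5$'' you flag as the hard step is a non-issue: writing $n=n'\cdot 10^i$ with $10\nmid n'$, the number $(n')^n$ is never divisible by $10$, so all trailing zeros of $n^n$ come from $n$ itself and $\mbox{rbln}(n^n)$ is read off from $(n')^n \bmod 100$.

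What is actually needed --- and what the paper supplies --- is a \emph{hierarchical} periodicity rather than a local one. One proves (i) $\mbox{rbln}(n^n)=\mbox{rbln}((n+100)^{n+100})$ whenever $100\nmid n$, and (ii) for $100\mid n$ the digit $\mbox{rbln}(n^n)$ depends only on $\mbox{lnzd}(n)$: it equals $7$, $2$, or $0$ according as $\mbox{lnzd}(n)$ is even, equal to $5$, or odd and different from $5$ (this rests on $76$ and $25$ being fixed under squaring modulo $100$ and on $(\ell d)^{100}\equiv 01 \bmod 100$ for $\gcd(\ell d,10)=1$). One then telescopes $P=\sum_{i=0}^{n-1}R_i+P_n$, where $R_i$ retains the digits at positions divisible by $10^i$ but not $10^{i+1}$ and is rational with period $10^{i+1}$, and $P_n$ retains only the positions divisible by $10^n$; by (ii) the latter carries the fixed pattern $0\,7\,0\,7\,2\,7\,0\,7\,0\,0\ldots$ spaced $10^n$ apart, which agrees with $s_n/t_n=7/(10^{2\cdot 10^n}-1)$ until the $(5\cdot 10^n)$-th decimal place. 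This yields $q_n=10^{2\cdot 10^n}-1$ with $|P-p_n/q_n|<q_n^{-2.4}$ --- an exponent close to $5/2$ coming from two and a half full periods of agreement, which no truncation or constant-window argument can reproduce. Without lemmas (i) and (ii) and the telescoping decomposition, your outline cannot be completed.
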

To prove Theorem \ref{pen}, we use Dresden's technique \cite{G2}. However, because we consider the rbln digits, our proof requires an argument about the invariance of $\mbox{rbln}(n^n)$ across certain values of $n$ (see Lemma \ref{inva} and Lemma \ref{2inva} below). 

\section{Proof of Theorem \ref{pen}}
\subsection{Preliminaries}
The following lemma is very useful in proving the transcendence of $P$. It shows the relationship between $\mbox{lnzd}(n^n)$ and $\mbox{rbln}(n^n)$.
\begin{lem}\label{inva}
Let $n\in \mathbb{N}$ and $100\mid n$. Then 
\begin{enumerate}
\item $\mbox{lnzd}(n^n) =  6$ implies that $\mbox{rbln}(n^n) = 7$,
\item $\mbox{lnzd}(n^n) = 5$ implies that $\mbox{rbln}(n^n) = 2$,
\item $\mbox{lnzd}(n^n) = 1$ implies that $\mbox{rbln}(n^n) = 0$.
\end{enumerate}
\end{lem}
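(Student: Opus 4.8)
The plan is to reduce each implication to a single congruence modulo $100$. Writing $n = 2^{\alpha}5^{\beta}k$ with $\gcd(k,10)=1$, the hypothesis $100\mid n$ gives $\alpha,\beta\ge 2$, and $n^n = 2^{n\alpha}5^{n\beta}k^{n}$. Stripping the $\min(n\alpha,n\beta)$ trailing zeros produces the integer $M := n^n/10^{\min(n\alpha,n\beta)}$, which satisfies $10\nmid M$ and whose last two decimal digits are exactly $\mbox{rbln}(n^n)$ (tens place) and $\mbox{lnzd}(n^n)$ (units place); equivalently the ordered pair $(\mbox{rbln}(n^n),\mbox{lnzd}(n^n))$ is read off from $M\bmod 100$. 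So it suffices to compute $M\bmod 100$, and the given value of $\mbox{lnzd}(n^n)$ tells me which of the three cases $\alpha>\beta$, $\alpha=\beta$, $\alpha<\beta$ I am in.

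First I would record that the sign of $\alpha-\beta$ is forced by $\mbox{lnzd}(n^n)$: if $\mbox{lnzd}=6$ then $M$ is even, which can only happen when $\alpha>\beta$, so $M = 2^{n(\alpha-\beta)}k^{n}$; if $\mbox{lnzd}=5$ then $M$ is an odd multiple of $5$, forcing $\alpha<\beta$ and $M = 5^{n(\beta-\alpha)}k^{n}$; and if $\mbox{lnzd}=1$ then $\gcd(M,10)=1$, forcing $\alpha=\beta$ and $M = k^{n}$. In every case the exponent appearing on $2$ or $5$ is a positive multiple of $n$, hence at least $100$.

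The crux is the evaluation modulo $100$, which rests on three facts. Since $100\mid n$ we have $20\mid n$, and every integer coprime to $100$ has order dividing $20$ in $(\mathbb{Z}/100\mathbb{Z})^{\times}$ (because $\lambda(100)=\mathrm{lcm}(\lambda(4),\lambda(25))=\mathrm{lcm}(2,20)=20$); hence $k^{n}\equiv 1\pmod{100}$. Moreover $5^{c}\equiv 25\pmod{100}$ for every $c\ge 2$, and $2^{d}\equiv 76\pmod{100}$ whenever $d\ge 20$ with $20\mid d$ — the latter I would check from $2^{10}\equiv 24$, so $2^{20}\equiv 24^{2}=576\equiv 76$, together with $76^{2}\equiv 76$, which makes $76$ a fixed value persisting for all larger admissible exponents (and likewise $25\cdot 5\equiv 25$ stabilizes the powers of $5$). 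Feeding these into the three expressions for $M$ gives $M\equiv 2^{n(\alpha-\beta)}\cdot 1\equiv 76$, $M\equiv 5^{n(\beta-\alpha)}\cdot 1\equiv 25$, and $M\equiv 1\pmod{100}$, i.e.\ last two digits $76$, $25$, $01$, yielding $\mbox{rbln}(n^n)=7,2,0$ respectively.

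I expect the main obstacle to be the bookkeeping in the crux step rather than any deep idea: correctly arguing that the unit part $k^{n}$ collapses to $1$ (which is exactly where $20\mid n$ is used) and that the single surviving prime power has reached its eventual periodic residue modulo $100$. A secondary point requiring care is the clean separation of the three cases by the value of $\mbox{lnzd}(n^n)$, together with the observation that $10\nmid M$ guarantees $M\bmod 100$ really does encode the rbln digit in its tens place.
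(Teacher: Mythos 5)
Your proof is correct, and while it lands on the same arithmetic fact as the paper --- that the trailing-zero-stripped value of $n^n$ is congruent to $76$, $25$, or $01$ modulo $100$ in the three cases --- it gets there by a genuinely different and more structural route. The paper cases on $\mbox{lnzd}(n)$, strips the trailing zeros of $n$ itself to obtain $n'$, rewrites $(n')^n$ as $\bigl((n')^{100}\bigr)^{n'\cdot 10^{i-2}}$, and then verifies digit-by-digit (partly by an explicit computer check) that $d^{100}$ ends in $76$, $25$, or $01$ for each of the finitely many relevant two-digit endings $d$, finishing with the idempotence of $76$ and $25$ modulo $100$. You instead factor $n=2^{\alpha}5^{\beta}k$, read off which of $\alpha>\beta$, $\alpha<\beta$, $\alpha=\beta$ holds from the divisibility of the stripped value $M$ by $2$ or $5$, and collapse the coprime part via $\lambda(100)=20$ together with the stabilizations $2^{20m}\equiv 76$ and $5^{c}\equiv 25 \pmod{100}$. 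This buys a uniform argument that eliminates both the case split on $\mbox{lnzd}(n)$ and the paper's ``quick check by computer'' that $(\ell d)^{100}$ ends in $01$ for all residues coprime to $10$, at the modest cost of invoking the Carmichael function; it also transparently handles the degenerate instance $M<10$ (e.g.\ $n=100$, where $M=1$), which is consistent with the convention $\mbox{rbln}=0$ for numbers with a single non-zero digit.
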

\begin{rek}\label{skey}\normalfont
Since $76^2$ and $25^2$ end with $76$ and $25$, respectively, if we let $N$ be a number with last two digits $76$ (25, resp), then $N^j$ also end with $76$ (25, resp) for all $j\in\mathbb{N}$.
\end{rek}
\begin{proof}
First, we prove item (1). In order that $\mbox{lnzd}(n^n) = 6$, $\mbox{lnzd}(n) \in \{2,4,6,8\}$. We consider the case when $\mbox{lnzd}(n) = 2$. The other cases are similar. Let $n'$ be the number $n$ without its trailing zeros and let $i\ge 2$ be the largest integer such that $10^i\mid n$. We have $$\mbox{rbln}(n^n) \ =\ \mbox{rbln}((n'\cdot 10^i)^{n'\cdot 10^i}) \ =\ \mbox{rbln}((n')^{n'\cdot 10^i}) \ =\ \mbox{rbln}((n'^{100})^{n'\cdot 10^{i-2}}).$$ If $\mbox{rbln}(n') = 0$, then since $2^{100}$ ends in $76$, $n'^{100}$ ends in $76$. By Remark \ref{skey}, $(n'^{100})^{n'\cdot 10^{i-2}}$ also ends in $76$, which implies that $\mbox{rbln}(n^n) = 7$. If $\mbox{rbln}(n')\neq 0$, note that $$12^{100}, 22^{100}, \ldots, 92^{100}$$ all end in $76$. Again, we have $\mbox{rbln}(n^n) = 7$.

The proof of item (2) is similar to the proof of item (1), so we omit it. 

We now prove item (3). Because $\mbox{lnzd}(n^n)=1$, $d = \mbox{lnzd}(n)=\mbox{lnzd}(n')\in \{1,3,7,9\}$. So, $\mbox{lnzd}(n^{100})=1$. Let $k=\mbox{rbln}(n^{100})$ and $\ell=\mbox{rbln}(n')$. Then $k$ is determined solely by the integer $\ell d$. Since $d\in \{1,3,7,9\}$, $\ell d$ is divisible by neither $2$ nor $5$. A quick check for all possible numbers $\ell d$ in $\{1,2,\ldots,99\}$ by computer shows that $(\ell d)^{100}$ ends in $01$. Hence, $k = 0$. Since $100\mid n$, we know that $\mbox{rbln}(n^n)=0$. 
\end{proof}
\begin{cor}\label{cG1} Let $n\in \mathbb{N}$ and $100\mid n$. The following claims are true. 
\begin{enumerate}
\item If $\mbox{lnzd}(n)$ is in $\{2,4,6,8\}$, then $\mbox{rbln}(n^n) = 7$.
\item If $\mbox{lnzd}(n) = 5$, then $\mbox{rbln}(n^n) = 2$.
\item If $\mbox{lnzd}(n)$ is in $\{1,3,7,9\}$, then $\mbox{rbln}(n^n) = 0$.
\end{enumerate}
\end{cor}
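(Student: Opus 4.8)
The plan is to reduce each of the three cases to the corresponding case of Lemma \ref{inva} by first determining $\mbox{lnzd}(n^n)$ from the hypothesis on $\mbox{lnzd}(n)$. First I would strip off the trailing zeros exactly as in the proof of Lemma \ref{inva}: since $100 \mid n$, write $n = n' \cdot 10^i$ with $i \ge 2$ and $10 \nmid n'$, so that $\mbox{lnzd}(n) = \mbox{lnzd}(n')$ is simply the last digit $d$ of $n'$, and $n' \equiv d \pmod{10}$. Because $n^n = (n')^n \cdot 10^{in}$, multiplying by the power of ten only appends zeros, and hence $\mbox{lnzd}(n^n) = \mbox{lnzd}((n')^n)$.

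The key observation is that $(n')^n$ has no trailing zeros of its own. Indeed, $(n')^n$ is divisible by $10$ only if $n'$ is divisible by both $2$ and $5$, i.e. by $10$, which is excluded by construction. Therefore $\mbox{lnzd}((n')^n)$ is just the ordinary last digit of $(n')^n$, which equals $d^n \bmod 10$. Now I would use that $100 \mid n$ forces $4 \mid n$ and $n \ge 4$, and run through the short last-digit cycles for each class of $d$. For $d \in \{1,3,7,9\}$ one has $d^4 \equiv 1 \pmod{10}$, so $d^n \equiv 1$; for $d = 5$ every positive power ends in $5$; and for $d \in \{2,4,6,8\}$ one checks that $4 \mid n$ puts each cycle at its ``$6$'' position (for instance $2^4 \equiv 8^4 \equiv 4^2 \equiv 6$, and $6^n \equiv 6$), so $d^n \equiv 6 \pmod{10}$. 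This gives $\mbox{lnzd}(n^n) = 1$, $5$, and $6$ in the three respective cases.

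Finally I would feed these values into Lemma \ref{inva}. When $\mbox{lnzd}(n) \in \{2,4,6,8\}$ we get $\mbox{lnzd}(n^n) = 6$, so part (1) of the lemma yields $\mbox{rbln}(n^n) = 7$; when $\mbox{lnzd}(n) = 5$ we get $\mbox{lnzd}(n^n) = 5$, so part (2) yields $\mbox{rbln}(n^n) = 2$; and when $\mbox{lnzd}(n) \in \{1,3,7,9\}$ we get $\mbox{lnzd}(n^n) = 1$, so part (3) yields $\mbox{rbln}(n^n) = 0$. This is precisely the three claims of the corollary.

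The only point requiring genuine care is the bookkeeping of trailing zeros: I must justify that stripping the factor $10^{in}$ from $n^n$ is harmless, which rests on the remark that $(n')^n$ itself contributes no new trailing zeros. Once that is secured, $d^n \bmod 10$ really is $\mbox{lnzd}(n^n)$, and everything else is a finite check of last-digit cycles together with a direct appeal to Lemma \ref{inva}. I do not expect any substantial obstacle beyond this bookkeeping, since the arithmetic input has already been isolated in the lemma.
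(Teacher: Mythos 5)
Your proof is correct, and it is worth noting that it takes a slightly different route from the paper's. The paper disposes of the corollary in one line---``follows directly from the proof of Lemma \ref{inva}''---because the case analysis inside that proof is in fact organized by the value of $\mbox{lnzd}(n)$ (it opens with ``In order that $\mbox{lnzd}(n^n)=6$, $\mbox{lnzd}(n)\in\{2,4,6,8\}$'' and then argues case by case on the last digit of $n'$), so the corollary's statements are exactly what that proof establishes along the way. You instead treat Lemma \ref{inva} as a black box and supply the missing bridge yourself: stripping the factor $10^{in}$ from $n^n$, observing that $(n')^n$ has no trailing zeros since $10\nmid n'$, and computing $\mbox{lnzd}(n^n)=d^n \bmod 10$ via the last-digit cycles together with $4\mid n$, which yields $6$, $5$, $1$ in the three cases and lets you invoke the lemma's three parts as stated. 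Your version is a bit longer but has the virtue of making explicit the correspondence between $\mbox{lnzd}(n)$ and $\mbox{lnzd}(n^n)$ that the paper asserts without proof inside Lemma \ref{inva}, and of not depending on the internal structure of the lemma's proof. The cycle computations ($d^4\equiv 1 \pmod{10}$ for $d\in\{1,3,7,9\}$, $d^n\equiv 6$ for $d\in\{2,4,6,8\}$ with $4\mid n$, and $5^n\equiv 5$) are all correct, so there is no gap.
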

\begin{proof}
The proof follows directly from the proof of Lemma \ref{inva}.
\end{proof}
We now prove a lemma parallel to \cite[Lemma 2]{G2}.
\begin{lem}\label{2inva}
Let $n\in\mathbb{N}$ and $100\nmid n$, then 
$$\mbox{rbln}(n^n) \ =\ \mbox{rbln}((n+100)^{n+100}).$$
\end{lem}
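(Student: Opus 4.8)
The plan is to show that raising the base from $n$ to $n+100$ does not change the rbln digit, by reducing everything to the two rightmost nonzero digits and exploiting the periodicity modulo $100$ together with a periodicity in the exponent. Since $100\nmid n$, write $n = n' \cdot 10^i$ where $n'$ is $n$ stripped of its trailing zeros and $10^i \| n$ with $i \in \{0,1\}$ (if $i \ge 2$ then $100 \mid n$, contradicting the hypothesis). The key observation is that $\mbox{rbln}(n^n)$ depends only on the last two nonzero digits of $n^n$, which in turn depend only on $n' \bmod 100$ and on the exponent $n$ modulo the multiplicative period of $n' \bmod 100$ in $(\Z/100\Z)^\times$ (or its analogue when $n'$ shares factors with $10$). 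So first I would establish that $n' \equiv (n+100)'\pmod{100}$, i.e. that stripping trailing zeros from $n$ and from $n+100$ yields numbers congruent mod $100$; this needs a short case analysis on $i$, since adding $100$ interacts with the trailing zeros differently depending on whether $n$ ends in $0$ or not.

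Next I would handle the exponents. Writing $m = n+100$, I want $(n')^{n} \equiv (m')^{m} \pmod{1000}$ or more precisely that the two rightmost nonzero digits agree; working modulo a suitable power of $10$ (say $\bmod\,1000$ to be safe about the second nonzero digit) reduces the claim to comparing $n$ and $n+100$ as exponents. The bases $n'$ and $m'$ are congruent mod $100$ by the previous step, so I would invoke the fact that the multiplicative order of any unit mod $100$ divides $20$, and more generally that $a^{e} \bmod 100$ (for $a$ coprime to appropriate primes) is periodic in $e$ with period dividing $20$; since the exponents differ by $100$, a multiple of $20$, the relevant residues coincide. The cases $i=0$ and $i=1$ would be treated separately because the trailing zeros factor out as $(10^i)^{n}$ and shift which digits of $(n')^{n'\cdot 10^i}$ become the last two nonzero ones.

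The main obstacle I expect is the appearance of the factor $2$ or $5$ in $n'$ itself: when $\mbox{lnzd}(n) \in \{2,4,5,6,8\}$ the base is not a unit mod $10$, so I cannot directly use the clean order-$20$ periodicity in $(\Z/100\Z)^\times$. Here the strategy parallel to \cite[Lemma 2]{G2} is to split off the $2$-part and $5$-part of the last nonzero portion and track them separately, using that powers of $2$ (resp.\ $5$) modulo $100$ are eventually periodic with a short pre-period and period dividing $20$, so that $2^{e}$ and $2^{e+100}$ land in the same residue once $e$ is large enough (which it always is here, as $n \ge 1$ forces the exponents past any pre-period). I would organize the proof as a case split according to $\mbox{lnzd}(n')$ being coprime to $10$, even, or equal to $5$, verifying in each case that both the trailing-zero count and the two rightmost nonzero digits of $n^n$ and $(n+100)^{n+100}$ agree, hence $\mbox{rbln}(n^n) = \mbox{rbln}((n+100)^{n+100})$.
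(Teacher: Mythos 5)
Your plan follows the same skeleton as the paper's proof: reduce $\mbox{rbln}(n^n)$ to the last two digits of $n^n$ after its trailing zeros are stripped, and then use the fact that $a^e \bmod 100$ is (eventually) periodic in $e$ with period dividing $20$, so that shifting the exponent by $100$ changes nothing; the paper packages exactly this as $n^n(n^{100}-1)\equiv 0\pmod{100}$, split according to $\mbox{lnzd}(n)$. For $n$ with nonzero last digit your argument works. But there is a genuine gap in the case where $n$ ends in exactly one zero. You assert that $n'\equiv (n+100)'\pmod{100}$; this is false there: if $n=10n'$ with $10\nmid n'$, then $n+100=10(n'+10)$, so $(n+100)'=n'+10\not\equiv n'\pmod{100}$ (e.g.\ $n=10$ gives $n'=1$ but $(n+100)'=11$). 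Your whole mechanism for comparing the two powers rests on the stripped bases being congruent mod $100$, so that only the exponents need comparing; in this case that premise fails and the argument as written does not go through. The missing ingredient is that the discrepancy of $10$ in the base is harmless precisely because the exponent $n+100$ is a multiple of $10$: by the binomial theorem
$$(n'+10)^{n+100}\ \equiv\ (n')^{n+100}+(n+100)\,(n')^{n+99}\cdot 10\ \equiv\ (n')^{n+100} \pmod{100},$$
after which your periodicity argument closes the case. (The paper's own proof also passes over this point quickly, writing ``equivalently'' between $n'^n\equiv (n'+10)^{n+100}$ and $n'^n\equiv n'^n n'^{100}$, but it never asserts the false congruence of the stripped bases.)

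Two smaller points. First, $\mbox{rbln}(N)$ is determined by $N/10^{v}\bmod 100$, where $v$ is the number of trailing zeros of $N$, not by ``the two rightmost nonzero digits'' of $N$: for $N=1020$ the two rightmost nonzero digits are $1$ and $2$, yet $\mbox{rbln}(1020)=0$. Your computations suggest you mean the former, but the phrasing should be corrected. Second, your final checklist proposes to verify that the trailing-zero counts of $n^n$ and $(n+100)^{n+100}$ agree; they do not ($10^{10}$ has ten trailing zeros, $110^{110}$ has $110$), and fortunately this is irrelevant to $\mbox{rbln}$, so that claim should simply be dropped.
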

\begin{proof}
Because $n$ is not divisible by $100$, either the last digit of $n$ is non-zero or the next-to-last digit of $n$ is non-zero. We consider the two corresponding cases. 

Case 1: the last digit of $n$ is non-zero. Call this digit $d$. Then the last digits of $n^n$ and $(n+100)^{n+100}$ are also non-zero. Hence, it suffices to prove that 
$$n^n \ \equiv\ (n+100)^{n+100}\Mod 100.$$
Equivalently, 
$$n^n \ \equiv\ n^n\cdot n^{100}\Mod 100\mbox{ } \mbox{, so  }\mbox{ } n^n(n^{100}-1)\equiv 0\Mod 100.$$

If $d$ is in  $\{2,4,6,8\}$, then $4\mid n^n$ and $n^{100}$ ends in $76$ as in the proof of Lemma \ref{inva} item (1). So, $75\mid(n^{100}-1)$ and so, $100\mid n^n(n^{100}-1)$, as desired. 

If $d$ is in $\{1,3,7,9\}$, then $n^{100}$ ends in $01$ as in the proof of Lemma \ref{inva} item (3). So, $100\mid(n^{100}-1)$ and so, $100\mid n^n(n^{100}-1)$, as desired.

If $d = 5$, then $n^{100}$ ends in $25$ as implied by Lemma \ref{inva} item (2). So, $n^{100}-1$ ends in $24$ and thus, $4\mid(n^{100}-1)$. On the other hand, since $5\mid n$, $25\mid n^n$. Hence, $100\mid n^n(n^{100}-1)$, as desired.

Case 2: the last digit of $n$ is zero and the next-to-last digit, called $d$, is non-zero. Let $n' = n/10$. Note that $\mbox{lnzd}(n') = d$. It suffices to prove that 
$$n'^n \ \equiv \ (n'+10)^{n+100} \Mod 100.$$
Equivalently, $$n'^n \ \equiv\ n'^nn'^{100} \Mod 100\mbox{ }\mbox{, so }\mbox{ }n'^n(n'^{100}-1)\equiv 0\Mod 100.$$
The same argument as in Case 1 would show that the last congruence holds. 

We have completed the proof. 
\end{proof}

\subsection{Proof of Theorem \ref{pen}}
Recall that we form our number $P = 0.d_1d_2d_3\ldots$ with $d_n = \mbox{rbln}(n^n)$. In particular, 
\begin{align*}P \ =\ 0.&\mbox{ }0025254180\mbox{ }1551717277\mbox{ }2867270364\\
&\mbox{ }3713731057\mbox{ }4609296542\mbox{ }5975755837\ldots.\end{align*}
At this point, we hardly see any pattern. To prove transcendence, we will show that there are infinitely many rational numbers $p_n/q_n$ such that 
$$\bigg|P - \frac{p_n}{q_n}\bigg| \ < \ \frac{1}{q_n^{2.4}}.$$
Then, by Theorem \ref{TSR}, $P$ is transcendental. To discover a pattern, let's create a sequence of numbers $P_0 = P, P_1, P_2, \ldots$ such that each $P_n$ is formed by replacing every digit in $P_{n-1}$ with zeros except for every tenth  digit (which will be left alone). The following picture illustrates the process
\begin{align*}
    &P_0 \ =\ 0.\mbox{ }0025254180\mbox{ }1551717277\mbox{ }2867270364\mbox{ }3713731057\mbox{ }4609296542\mbox{ }\ldots\\
    &P_1 \ =\ 0.\mbox{ }0000000000\mbox{ }0000000007\mbox{ }0000000004\mbox{ }0000000007\mbox{ }0000000002\ldots\\
    &P_2 \ =\ 0.\mbox{ }0000000000\mbox{ }0000000000\mbox{ }0000000000\mbox{ }0000000000\mbox{ }0000000000\ldots
\end{align*}
The number $P_1$ keeps the digits of $P_0$ at the $10^{th}, 20^{th}, 30^{th}$, etc. decimal places. The number $P_2$ keeps the digits of $P_0$ at the $100^{th}, 200^{th}, 300^{th}$, etc. places. In general, the number $P_n$ keeps the digits of $P_0$ at the $10^n, 2\cdot 10^n, 3\cdot 10^n$, and so on. Let's look at the digits we keep by condensing all the replacements of $0$ in the above construction, but distances between digits still corresponds to the number of zeros between them. We observe a beautiful pattern after $P_2$
\begin{align*}
    &P_0 \ =\ 0.0025254180\mbox{ }\ldots\\
    &P_1 \ =\ 0.\mbox{ }0\mbox{ }7\mbox{ }4\mbox{ }7\mbox{ }2\mbox{ }7\mbox{ }4\mbox{ }7\mbox{ }0\mbox{ }0\mbox{ }\ldots\\
    &P_2 \ =\ 0.\mbox{ }\mbox{ }0\mbox{ }\mbox{ }7\mbox{ }\mbox{ }0\mbox{ }\mbox{ }7\mbox{ }\mbox{ }2\mbox{ }\mbox{ }7\mbox{ }\mbox{ }0\mbox{ }\mbox{ }7\mbox{ }\mbox{ }0\mbox{ }\mbox{ }0\mbox{ }\mbox{ }\mbox{ }\ldots\\
    &P_3 \ =\ 0.\mbox{ }\mbox{ }\mbox{ }0\mbox{ }\mbox{ }\mbox{ }7\mbox{ }\mbox{ }\mbox{ }0\mbox{ }\mbox{ }\mbox{ }7\mbox{ }\mbox{ }\mbox{ }2\mbox{ }\mbox{ }\mbox{ }7\mbox{ }\mbox{ }\mbox{ }0\mbox{ }\mbox{ }\mbox{ }7\mbox{ }\mbox{ }\mbox{ }0\mbox{ }\mbox{ }\mbox{ }0\mbox{ }\mbox{ }\mbox{ }\ldots\\
    &P_4 \ =\ 0.\mbox{ }\mbox{ }\mbox{ }\mbox{ }0\mbox{ }\mbox{ }\mbox{ }\mbox{ }7\mbox{ }\mbox{ }\mbox{ }\mbox{ }0\mbox{ }\mbox{ }\mbox{ }\mbox{ }7\mbox{ }\mbox{ }\mbox{ }\mbox{ }2\mbox{ }\mbox{ }\mbox{ }\mbox{ }7\mbox{ }\mbox{ }\mbox{ }\mbox{ }0\mbox{ }\mbox{ }\mbox{ }\mbox{ }7\mbox{ }\mbox{ }\mbox{ }\mbox{ }0\mbox{ }\mbox{ }\mbox{ }\mbox{ }0\mbox{ }\mbox{ }\mbox{ }\mbox{ }\ldots
\end{align*}
By Corollary \ref{cG1}, we know that for all $i>j\ge 2$, the sequence of digits we keep for $P_i$ is the same as the sequence of digits we keep for $P_{j}$. This implies that $R_n = P_n - P_{n+1}$ is rational for $n\ge 2$; the cases $n=0$ and $n=1$ follow immediately from Lemma \ref{2inva}. Observe that for $n\ge 2$, $$P_n \ =\  0.\mbox{ }\mbox{ }\mbox{ }0\mbox{ }\mbox{ }\mbox{ }7\mbox{ }\mbox{ }\mbox{ }0\mbox{ }\mbox{ }\mbox{ }7\mbox{ }\mbox{ }\mbox{ }2\mbox{ }\mbox{ }\mbox{ }7\mbox{ }\mbox{ }\mbox{ }0\mbox{ }\mbox{ }\mbox{ }7\mbox{ }\mbox{ }\mbox{ }0\mbox{ }\mbox{ }\mbox{ }0\mbox{ }\mbox{ }\mbox{ }\ldots$$ is quite close to the rational number
$$\frac{s_n}{t_n} \ =\ 0.\mbox{ }\mbox{ }\mbox{ }0\mbox{ }\mbox{ }\mbox{ }7\mbox{ }\mbox{ }\mbox{ }0\mbox{ }\mbox{ }\mbox{ }7\mbox{ }\mbox{ }\mbox{ }0\mbox{ }\mbox{ }\mbox{ }7\mbox{ }\mbox{ }\mbox{ }0\mbox{ }\mbox{ }\mbox{ }7\mbox{ }\mbox{ }\mbox{ }0\mbox{ }\mbox{ }\mbox{ }7\mbox{ }\mbox{ }\mbox{ }\ldots $$
Actually, these non-zero digits of $s_n/t_n$ are $10^n$ decimal places apart. So, $$\frac{s_n}{t_n} \ =\ \frac{7}{10^{2\cdot 10^n}-1}.$$ Also, because $t_n < 10^{2\cdot 10^n}$, we have
$$\bigg|P_n - \frac{s_n}{t_n}\bigg| \ <\ \frac{3}{10^{5\cdot 10^n}}\ <\ \frac{1}{10^{4.8\cdot 10^n}}\ <\ \frac{1}{t_n^{2.4}}.$$
Let's now relate this back to $P$. Since $R_n = P_n-P_{n+1}$ has period  $10^{n+1}$ for $n>0$, we know that each rational number $R_n$ has denominator $10^{10^{n+1}}-1$ for $n > 0$ and denominator $10^{100}-1$ for $n = 0$. For each $n\in \mathbb{N}$, define $p_n/q_n$ as
$$\frac{p_n}{q_n} \ =\ \bigg(\sum_{i=0}^{n-1}R_i\bigg)+\frac{s_n}{t_n}.$$
Because the denominator of each $R_i$ divides $t_n$, we know that $q_n = t_n = 10^{2\cdot 10^n}-1$. 
Finally, 
$$\bigg|P - \frac{p_n}{q_n}\bigg| \ =\ \bigg|P - \bigg(\sum_{i=0}^{n-1}R_i\bigg) - \frac{s_n}{t_n}\bigg|\ =\ \bigg|P_n - \frac{s_n}{t_n}\bigg| \ <\ \frac{1}{t_n^{2.4}} \ =\ \frac{1}{q_n^{2.4}}.$$
By Theorem \ref{TSR}, we know that $P$ is transcendental. 

\section{Future Work}
We end with several questions for future research. First, is there a simple sufficient condition on which sequences produce transcendental numbers? Note that Ikeda and Matsuoka gave a sufficient condition \cite{IM}; however, the condition requires a check for rationality, which is not always easy. Another question is whether the decimal from the last non-zero digits of the triangular numbers transcendental?\footnote{Dresden \cite{G2}} What about their rbln digits? Last, but not least, is the decimals from the last digits of the prime numbers transcendental? \footnote{Euler and Sadek \cite{ES}}

\end{document}